\newtheorem{Definition}{Definition}
\newtheorem{Lemma}{Lemma}
\newtheorem{Theorem}{Theorem}
\newtheorem{Remark}{Remark}
\title[Total destruction of invariant tori] {Total destruction of invariant tori for the generalized Frenkel-Kontorova model}
\author[X. Su]{Xifeng Su}
\address{Academy of Mathematics and Systems Science,Chinese Academy of Sciences,
No.55 Zhongguancun East Road, Beijing, 100190, CHINA}
\email{billy3492@gmail.com, xfsu@amss.ac.cn}
\author[L. Wang]{Lin Wang}
\address{Department of Mathematics, Nanjing University, 22 Hankou Road, Nanjing 210093,CHINA}
\email{linwang.math@gmail.com}
\begin{document}
\maketitle

\begin{abstract}
We consider generalized Frenkel-Kontorova models on higher
dimensional lattices. We show that the invariant tori which are
parameterized by continuous hull functions can be destroyed by small
perturbations in the $C^r$ topology with $r<1$.
\end{abstract}
\begin{quote}
\footnotesize {\it Key words.} invariant tori, variational methods,
hull functions, Percival Lagrangian
\end{quote}
\begin{quote}
\footnotesize {\it AMS subject classifications (2000).} 82B20,
37A60, 49J40, 58F30, 58F11 \end{quote} \vspace{20pt}

%\iffalse \keywords{ invariant tori, variational methods, hull
%functions, Percival's Lagrangian}

%\subjclass[2000]{
%82B20, %        Lattice systems (Ising, dimer, Potts, etc.) and systems on graphs
%37A60, %    Dynamical systems in statistical mechanics [See also 82Cxx]
%49J40, %    Variational methods including variational inequalities
%58F30, %    Perturbations
%58F11  %    Ergodic theory; invariant measures [See also 28Dxx.]
%} \fi

%\tableofcontents

\section{Introduction}
The standard Frenkel-Kontorova model:
\begin{equation}\label{standard FK}
\mathscr{L}(u)=\sum_{i\in\mathbb{Z}}\frac{1}{2}(u_i-u_{i+1})^2+\frac{\lambda}{2\pi}
\cos(2\pi u_i),
\end{equation} is the most famous example of models to describe
rather crude microscopic theory of plasticity due to dislocations.
It also has been considered as a model of deposition of material
over a periodic 1-dimensional substratum. The $u_i$ denotes the
position of the $i$th particle and the terms
$\frac{1}{2}(u_i-u_{i+1})^2$ and $\frac{\lambda}{2\pi} \cos(2\pi
u_i)$ model the energy of interaction of nearest neighboring
particles and the energy of interaction with the substratum
respectively.

 In this paper, we
are concerned with the destruction of invariant tori for the
generalized Frenkel-Kontorova models on higher dimensional lattices,
i.e. the variational problem for ``configurations''
$u:\mathbb{Z}^d\rightarrow \mathbb{R}$:
\begin{equation}\label{variational problem}
\mathscr{L}(u)= \sum_{i\in\mathbb{Z}^d} \sum_{j=1}^{d}
H_j(u_i,u_{i+e_j})
\end{equation}where $H_j: \mathbb{R}^2\rightarrow \mathbb{R}$ and
$e_j\in\mathbb{R}^d$ is the unit vector in the $j$th direction. The
model \eqref{variational problem} is a natural generalization of
Frenkel-Kontorova models to more complicated physical lattices in
the point of view of the solid state motivation.

We say the Frenkel-Kontorova model admits an invariant torus with
rotation vector $\omega$ if the configuration $u$ with rotation
vector $\omega$ can be parameterized by some continuous hull
function (see \eqref{parameterization} below). The case $d=1$
corresponds to twist maps. Destruction of invariant tori for this
model is closely related to the instability problems in solids, such
as diffusions on lattices \cite{Sinisa'99} and so on.

The results in this direction, to the best of our knowledge, is the
work of V. Bangert who proved the existence of gaps after large
perturbations in the elliptic PDE case \cite{Bangert'87}. More
precisely, under appropriate non-degenerate conditions on integrand
$F$, \cite{Moser'86,Bangert'87a} proved the existence of Mather set
$\mathscr{M}_\alpha^{rec}(F)$. Here $\mathscr{M}_\alpha^{rec}(F)$ is
a natural generalization of the set
$\mathscr{M}_\alpha^{rec}(F_0)=\{v~|~v(x)=\alpha\cdot x+ v_0, v_0\in
\mathbb{R}\}$ for the Dirichlet integrand $F_0(x,v,v_x)=|v_x|^2$.
\cite{Bangert'87} provided some examples of integrands $F$ such that
the graphs of the functions $v\in \mathscr{M}_\alpha^{rec}(F)$ only
form a lamination, i.e. a foliation with gaps, for all rotation
vectors $\alpha\in \mathbb{R}^n$ with $|\alpha|$ smaller than some
arbitrary constant.

From a quite different point of view from V. Bangert's, we will
prove the existence of gaps for the generalized Frenkel-Kontorova
models after small perturbations by variational methods.

More precisely, we will prove the following theorem:

\begin{Theorem}\label{fundamental lemma}
There exists a sequence of $C^\infty$ functions
$\{H_j^n\}_{n\in\mathbb{N}}$ converging to the integrable system
$H_0$ with $H_0(x,x')=1/2(x-x')^2$ in the $C^{1-\epsilon}$ topology
such that for $n$ large enough, the generalized Frenkel-Kontorova
models produced by $H_j^n$ admit no invariant tori, where
$\epsilon>0$ is a small constant independent of $n$.
\end{Theorem}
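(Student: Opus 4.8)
The plan is to recast the existence of an invariant torus as the continuity of a minimizer of the Percival Lagrangian, and then to destroy that continuity by a small-amplitude, high-frequency modulation whose effect, after rescaling, is a strong substrate coupling. For an irrational rotation vector $\omega=(\omega_1,\dots,\omega_d)$ with $1,\omega_1,\dots,\omega_d$ rationally independent, a configuration $u_i=h(\omega\cdot i)$ with $h$ nondecreasing and $h(t+1)=h(t)+1$ is a minimal equilibrium of \eqref{variational problem} when $h$ minimizes the Percival Lagrangian
\begin{equation*}
P_\omega(h)=\int_0^1\sum_{j=1}^d H_j\bigl(h(t),h(t+\omega_j)\bigr)\,dt
\end{equation*}
over such monotone hull functions. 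Lower semicontinuity together with the monotonicity constraint yields a minimizer in this (BV-type) class, but a priori it may have jumps; the model admits the invariant torus of rotation vector $\omega$ if and only if this minimizer is continuous, equivalently if and only if the associated Peierls barrier vanishes. Thus the theorem reduces to producing $H_j^n\to H_0$ in $C^{1-\epsilon}$ for which the Percival minimizer carries a jump of definite size, uniformly in $\omega$.

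Next I would fix the perturbation. Keeping the quadratic couplings, I add to the on-site part of $H_1$ the smooth term $\vartheta_n(x)=a_n\cos(2\pi m_n x)$ with $m_n\in\mathbb{N}$, $m_n\to\infty$, and $a_n=m_n^{-(1-\epsilon/2)}$; this is $1$-periodic, is $C^\infty$, and satisfies $\|\vartheta_n\|_{C^{1-\epsilon}}\sim a_n m_n^{1-\epsilon}=m_n^{-\epsilon/2}\to0$, so $H_j^n\to H_0$ in $C^{1-\epsilon}$ with $\epsilon$ fixed. The decisive observation is the rescaling $x\mapsto m_n x$: up to the harmless overall factor $m_n^{-2}$, it converts the perturbed model into a generalized Frenkel--Kontorova model with the same quadratic couplings, with shifted rotation vector $m_n\omega$, and with substrate amplitude $\lambda_n=a_n m_n^{2}=m_n^{1+\epsilon/2}\to\infty$. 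Although the perturbation is small in $C^{1-\epsilon}$, in the variable natural to the problem it is a strong-coupling substrate, driving us above the analogue of the Aubry transition.

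I would then prove the quantitative destruction by a direct comparison inside the Percival functional of the rescaled model. Against the continuous ``sliding'' candidate $h\approx\mathrm{id}$ I would test a ``pinned'' competitor $\tilde h$ that locks the particles into the wells of the substrate and jumps between consecutive wells; the substrate energy gained by pinning dominates the elastic and jump costs once $\lambda_n$ is large, so that $P_\omega(\tilde h)<P_\omega(h)$ for $n$ large. This forces the minimizer to carry a jump whose size is bounded below independently of $\omega$, i.e. a positive Peierls barrier; transporting the conclusion back through the rescaling destroys the torus of every rotation vector simultaneously for all $n$ large, which is exactly the assertion of the theorem.

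The main obstacle is precisely this comparison made uniform in $\omega$ and honest about the several shifts $\omega_1,\dots,\omega_d$. For resonant or near-resonant rotation vectors the pinning gain is transparent, but for strongly Diophantine $\omega$ one must show that the elastic cost of the incommensurate discommensurations still cannot compensate the substrate gain once $\lambda_n$ is large; this is the $d$-dimensional counterpart of Mather's nonexistence of invariant circles beyond the critical coupling, and it is the one place where the uniform convexity of the quadratic couplings and the precise growth $\lambda_n\to\infty$ are genuinely used. The remaining points—smoothing the pinned competitor so that the comparison is legitimate, checking that continuity of the hull function is preserved under the rescaling, and bookkeeping the sum over $j=1,\dots,d$—are routine once the uniform energy estimate is established.
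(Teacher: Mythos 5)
Your starting point---recasting the existence of an invariant torus as continuity of the Percival minimizer---coincides with the paper's, but from there your argument has a genuine gap: the entire burden of the proof rests on the assertion that once the rescaled substrate amplitude $\lambda_n=a_nm_n^{2}\to\infty$, the $d$-dimensional lattice model with quadratic couplings admits no invariant torus for \emph{any} rotation vector, uniformly in $\omega$. You yourself identify this as ``the main obstacle'' and then defer it, calling it ``the $d$-dimensional counterpart of Mather's nonexistence of invariant circles beyond the critical coupling.'' But in the setting of generalized Frenkel--Kontorova models on $\mathbb{Z}^d$ there is no such theorem available to cite: a quantitative, $\omega$-uniform strong-coupling nonexistence statement is precisely what Theorem \ref{fundamental lemma} amounts to, so your plan is circular at its decisive step. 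The paper closes exactly this gap by elementary variational estimates in which the rotation vector never appears: Lemma \ref{estimate lemma} bounds the difference of the elastic parts of $\mathscr{P}_\omega$ at two hull functions by $2\int_0^1|h_1-h_2|\,d\theta$ with a constant independent of $\omega$; Lemma \ref{estimates for the potential} turns this into a necessary condition for a torus (the potential gain of the continuous minimizer over any discontinuous competitor is at most the $L^1$ distance); and Section 5 violates this condition for every continuous $h_1$ simultaneously by taking $v_j^n$ to be a bump of height $1/n$ and width $2n^{-1/r}$ and letting $h_2$ agree with $h_1$ except for a flat jump across the preimage of the bump. The jump's $L^1$ cost is $O(n^{-2/r})$ while the potential gain is of order $n^{-(1+1/r)}$, and $2/r>1+1/r$ exactly when $r<1$; no renormalization, no Aubry-transition input, and no case analysis in $\omega$ (Diophantine or resonant) is needed.

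Two secondary defects would remain even if you had the strong-coupling theorem in hand. First, the rescaling $x\mapsto m_nx$ does not act cleanly on the class $Y$ of hull functions: if $h(\theta+1)=h(\theta)+1$, then $\tilde h(\theta)=m_nh(\theta/m_n)$ satisfies only $\tilde h(\theta+m_n)=\tilde h(\theta)+m_n$, so the equivalence ``torus for $\omega$ at small coupling iff torus for $m_n\omega$ at coupling $\lambda_n$'' must be argued at the level of ground-state configurations rather than asserted as a change of variables; this is standard for twist maps ($d=1$) via conjugacy of the dynamics, but it is not automatic for the variational lattice models considered here. Second, you perturb only $H_1$, while the Percival functional sums over $j=1,\dots,d$; the $d-1$ unperturbed directions each contribute an elastic cost with no compensating potential gain, so your comparison must beat $d$ times the $L^1$ distance---quantitatively harmless, but it has to be tracked. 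The paper avoids both issues by placing the same bump in every $H_j^n$ and never rescaling.
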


\section{Comparison with the literatures}
The study of invariant objects on which the dynamics can be
conjugated to a rotation is one of the central subjects of dynamical
systems and mathematical physics. The celebrated KAM theory ensures
the  persistence of KAM tori after small perturbation for integrable
Hamiltonian systems with non-degeneracy conditions and sufficient
differentiability (see \cite{Rafael'01} and references therein).

The Aubry-Mather theory which had its origin in the work of S. Aubry
\cite{Aubry'83} and J. Mather \cite{Mather'82} resembles KAM thoery.
It produces invariant objects by variational methods which could be
Cantor sets in the case of twist maps
\cite{Percival'79,Percival'80}. For higher dimensional lattices, the
corresponding Aubry-Mather theory is recently established in
\cite{Blank'89,Rafael'97,Rafael'98,Rafael'07,SL}.

The study of non-existence of invariant tori is the complementary to
the KAM theory and Aubry-Mather theory. The critical borderline
between existence and non-existence of invariant tori in the higher
dimensional case is still open and seems to be rather complicated
since there are some new phenomena involved.

The problem of destruction of invariant tori under small
perturbations has two different flavors as follows:
\begin{enumerate}
  \item  [a)]destruction of invariant torus with a given rotation
  vector,
  \item  [b)]total destruction of invariant tori for all rotation
  vectors.
\end{enumerate}

Case a) is closely related to the arithmetic property of the given
rotation vector. Roughly speaking, there is a balance among the
arithmetic property of the rotation vector, the regularity of the
perturbation and its topology. We mention \cite{H2, H3, Mather'88,
Forni'94} for destruction of invariant circles for twist maps and
\cite{Cheng} for destruction of KAM torus for Hamiltonian system of
multi-degrees of freedom. In particular, \cite{H2} considered
non-existence of invariant circle with arbitrary given rotation
number. \cite{W} provided a variational proof of Herman's result.
\cite{Mather'88, Forni'94} are concerned with destruction of
invariant circles with certain rotation numbers which can be very
fast approximated by rational numbers.

Comparing with Case a), due to the absence of arithmetic property,
Case b) is much harder and less results are obtained. Naturally, it
results in the loss of the regularity of the perturbation to
overcome the absence of arithmetic property. A criterition of total
destruction plays a crucial role. For twist maps, total destruction
of invariant circles is equivalent to the existence of a unbounded
orbit. Some criteria for non-existence of invariant tori appeared in
\cite{Chirikov'79} and examples of destruction for $C^1$ small
perturbation happened in \cite{Takens'71}. Much stronger result was
obtained by Herman based on a different geometrical criterition. In
\cite{H2}, he proved that for all rotation numbers, invariant
circles can be destroyed by $C^\infty$ perturbations close to $0$
arbitrarily in $C^{3-\epsilon}$ topology for the generating
functions. In \cite{H4}, Herman extended the above result to the
symplectic twist map of multi-degrees of freedom.

\begin{Remark}
All these subjects have a very long story. For example Appendix A of
\cite{CdlL'09} contains a mini-survey of the constructive methods to
verify the destruction of invariant tori for twist maps.
\end{Remark}

\section{Preliminaries}
 In order to use
variational methods, it is important to set up the Percival
Lagrangian approach to the original variational problem
\eqref{variational problem} in the following way (see
\cite{Percival'79,Percival'80, SL} for more details).

We assume that the configurations $u$ are parameterized by a
function $h$--the hull function--and a frequency $\omega\in
\mathbb{R}^d$ such that
\begin{equation}\label{parameterization}
u_i=h(\omega\cdot i).
\end{equation} where $i\in \mathbb{Z}^d$ and $\cdot$ is the usual inner product in $\mathbb{R}^d$.

Heuristically, considering a big box and normalizing the Lagrangian
in that big box, when the size of that big box goes to infinity in
some sense, we are led to considering
\begin{equation}\label{Percivalian}
   \mathscr{P}_\omega(h) =
   \sum_{j=1}^d \int_0^{1} H_{j}(h(\theta), h(\theta +\omega_j))d\theta.
\end{equation}

This imprecise derivation shows that $\mathscr{P}_\omega(h)$ has a
direct physical interpretation as the average energy per volume.

Finding the ground states for the variation problem
\eqref{variational problem} is transformed into finding the
minimizers of the variational problem \eqref{Percivalian}.  The
rigorous study of all these is included in \cite{SL}.

We also note that this formalism can be used as the basis of KAM
theory to produce smooth solutions under some assumptions such as
Diophantine properties of the frequencies, the system is close to
integrable, etc. (see \cite{SZ'89, CdlL'09}).

Assume $H_j$ with the following form:
\begin{equation}\label{generating function}
H_j(x,x')=\frac{1}{2}(x-x')^2+v_j(x).
\end{equation} where $v_j(x+1)=v_j(x)$.

We start by summarizing the main concepts of the origin generalized
Frenkel-Kontorova model \eqref{variational problem} in the sense of
calculus of variations.

According to \cite{Morse'24}, we have
\begin{Definition}
A configuration $u: \mathbb{Z}^d \rightarrow \mathbb{R} $
 is
called a class-A minimizer for \eqref{variational problem} when for
every $\varphi: \mathbb{Z}^d \rightarrow \mathbb{R}$ with $\varphi_i
= 0$ when $|i| \ge N$, we have
\begin{equation}\label{ground state}
\sum_{i\in\mathbb{Z}^d, |i| \le N+1 }\sum_{j=1}^d
    H_{j}(u_{i},u_{i+e_j} )
\leq
\sum_{i\in\mathbb{Z}^d, |i| \le N+1 }\sum_{j=1}^d
    H_{j}(u_{i}+ \varphi_i ,u_{i+e_j} + \varphi_{i + e_j} ).
\end{equation}
\end{Definition}

The equation \eqref{ground state} can be interpreted heuristically
as saying $\mathscr{L}(u) \leq \mathscr{L}(u + \varphi)$ after
canceling the terms on both sides that are identical.

Class-A minimizers are also called \emph{ground states} in the
mathematical physics literature and  \emph{local minimizers} in the
calculus of variations literature.

\begin{Definition}
We say that a configuration is a critical point of the action
whenever it satisfies the Euler-Lagrange equations:
\begin{equation}\label{EulerLagrange}
\sum_{j=1}^d \partial_1 H_j( u_i, u_{i + e_j})  +
\partial_2 H_j( u_{i - e_j}, u_i)  = 0\qquad \text{for every}~i \in
\mathbb{Z}^d.
\end{equation}
\end{Definition}

For every given rotation vector $\omega\in\mathbb{R}^d$, we
introduce the corresponding Percival Lagrangian \eqref{Percivalian}
defined on the space of functions:
\begin{equation}
Y=\{~h~|~h~\text{monotone},~h(\theta+1)=h(\theta)+1,~h(\theta_-)=h(\theta)\}
\end{equation} where $h(\theta_-)$ (or $h(\theta_+)$) denote the left (or right)
limit of $h$ at point $\theta$.

We endow $Y$ with the graph topology defined in the following way.

Denote
\[{\text{graph}}(h)=\{(\theta,y)\in\mathbb{R}^2:h(\theta)\leq y\leq
h(\theta_+)\}.\]

If $h, \tilde{h} \in Y$ we define the distance between $h$ and
$\tilde{h}$ as the Hausdorff distance of their graphs:
\begin{equation} \label{graphdistance}
d(h,\tilde{h})=\max \{\sup_{ \xi \in {\text{graph}}(h)} \rho( \xi,
{\text{graph}}(\tilde{h})),
  \sup_{\eta \in {\text{graph}}(\tilde h)}\rho(\eta,{\text{graph}}(h))\}
\end{equation}
 where
 $\rho(\cdot,\cdot)$ is the Euclidean distance from a point to a
set, $\rho(x, S) = \inf_{y \in S} | y -x| $. Note that the graph
topology is weaker than the $L^\infty$ topology.

The formal variation of $\mathscr{L}_\omega$ yields the following
Euler-Lagrange equation for the hull function $h$:
\begin{equation}\label{Euler-Lagrange equation for hull functions}
 \sum_{j=1}^d [\partial_1 H_j ( h(\theta), h(\theta + \omega_j)) + \partial_2 H_j( h(\theta - \omega_j) , h(\theta))] = 0.
\end{equation}

We have the following theorem proved in \cite{SL} (see
\cite{Mather'82} in the case of twist maps):
\begin{Theorem}
For any rotation vector $\omega\in \mathbb{R}^d$, there exists a
minimizer $h_\omega\in Y$ of $\mathscr{P}_\omega$ satisfying
\eqref{Euler-Lagrange equation for hull functions}. Moreover, the
configurations defined by \eqref{parameterization} are the ground
states of \eqref{variational problem}.
\end{Theorem}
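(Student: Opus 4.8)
The plan is to obtain $h_\omega$ by the direct method of the calculus of variations and then to upgrade its minimality to the Euler--Lagrange equation \eqref{Euler-Lagrange equation for hull functions} and to the class-A minimality \eqref{ground state} of the associated configuration. The first point to exploit is the pair of symmetries of $\mathscr{P}_\omega$ coming from \eqref{generating function}: since each $v_j$ is $1$-periodic, the functional is invariant both under the integer shifts $h\mapsto h+n$ and under the phase translations $h\mapsto h(\cdot+c)$. Using the integer shift I normalize a minimizing sequence $\{h_n\}\subset Y$ so that $h_n(0)\in[0,1)$; together with $h_n(\theta+1)=h_n(\theta)+1$ this confines $h_n|_{[0,1]}$ to the fixed interval $[0,2)$, so the $h_n$ are monotone and uniformly bounded there. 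Helly's selection theorem then yields a subsequence converging pointwise to a monotone limit, which I extend by the cocycle relation and normalize to be left-continuous, so that $h_\omega\in Y$; this is exactly sequential compactness of $Y$ in the graph topology \eqref{graphdistance} after quotienting by the integer shift and fixing the phase. For lower semicontinuity I observe that $h_n\to h_\omega$ holds at every continuity point of $h_\omega$, hence at all but countably many $\theta$, and likewise for the shifted arguments $\theta+\omega_j$; since each $H_j$ is continuous the integrands converge a.e., and by \eqref{generating function} they are bounded below by $\min_{x}v_j(x)$, which is finite because $v_j$ is continuous and periodic. Fatou's lemma gives $\liminf_n\mathscr{P}_\omega(h_n)\ge\mathscr{P}_\omega(h_\omega)$, so $h_\omega$ minimizes \eqref{Percivalian}.

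To produce \eqref{Euler-Lagrange equation for hull functions} I compute the first variation. For a $1$-periodic $C^1$ test function $\psi$ supported on a set where $h_\omega$ is strictly increasing, the competitor $h_\omega+s\psi$ stays in $Y$ for $|s|$ small; differentiating $s\mapsto\mathscr{P}_\omega(h_\omega+s\psi)$ at $s=0$ and relabelling $\theta\mapsto\theta-\omega_j$ in the term carrying $\partial_2H_j$ (legitimate by periodicity of the integrand) yields the weak form of \eqref{Euler-Lagrange equation for hull functions}. Because $\psi$ may be taken of either sign on intervals of strict monotonicity, equality holds there pointwise, hence at every continuity point of $h_\omega$. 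I expect making this variation fully rigorous to be a genuine technical nuisance, since the monotonicity constraint prevents free two-sided variations on flat intervals and at the countably many jump points; these are handled by noting that the constraint forces the Euler--Lagrange expression to carry a sign compatible with minimality, which combined with the already-established equality on the strict-monotonicity set closes the argument.

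The conceptual heart of the theorem, and the step I expect to be the main obstacle, is the passage from the global minimality of $h_\omega$ for $\mathscr{P}_\omega$ to the class-A minimality of the configuration $u_i=h_\omega(\omega\cdot i)$ from \eqref{parameterization}. Writing $u_{i\pm e_j}=h_\omega(\omega\cdot i\pm\omega_j)$, the identity \eqref{Euler-Lagrange equation for hull functions} evaluated at $\theta=\omega\cdot i$ is precisely the discrete Euler--Lagrange equation \eqref{EulerLagrange}, so $u$ is at least a critical configuration. To obtain the minimality inequality \eqref{ground state} for an arbitrary perturbation $\varphi$ supported on $|i|\le N$, I would use that $\mathscr{P}_\omega(h_\omega)$ is the phase-average of the single-site bond energies of the translated configurations $u_i^{s}=h_\omega(s+\omega\cdot i)$, so that by equidistribution of $\{\,\omega\cdot i\bmod 1\,\}$ (or periodicity in the rational case) the Percival integral records the averaged energy of every hull configuration. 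A finite modification strictly lowering $\mathscr{L}(u)$ would then, once spread consistently over a positive-measure set of phases via a box/translation-averaging construction, produce a competitor in $Y$ with $\mathscr{P}_\omega<\mathscr{P}_\omega(h_\omega)$, contradicting minimality. The delicate part is precisely this consistency: a single finite perturbation touches only measure-zero phase values, so the competitor must be assembled by averaging over a large window of translates and passing to the limit, and it is here that the details (non-crossing of minimizers, control of the box boundary) require the most care.
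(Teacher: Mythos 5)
A preliminary remark: the paper itself contains no proof of this theorem --- it is quoted as established in \cite{SL} (with \cite{Mather'82} for the twist-map case) --- so your proposal can only be measured against the standard argument of that literature, not against an in-paper proof.

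Your first step (direct method) is sound: integer-shift normalization, Helly selection, left-continuous renormalization, and Fatou with the integrand bounded below by $\min v_j$ is exactly the standard existence argument on $Y$. The first genuine gap is in your derivation of \eqref{Euler-Lagrange equation for hull functions}: the claim that $h_\omega+s\psi\in Y$ for small $|s|$ whenever $\psi$ is supported where $h_\omega$ is \emph{strictly} increasing is false. A strictly increasing $h$ may be singular, with derivative zero almost everywhere (a Cantor-type function plus a strictly increasing singular part), and then monotonicity of $h+s\psi$ fails for every $s\neq 0$ of the wrong sign; strict monotonicity provides no quantitative lower bound with which to absorb $s\psi'$. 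So ``equality on the strict-monotonicity set'' is not established, and your fallback --- ``the constraint forces a sign compatible with minimality'' --- is a correct heuristic but not an argument at flats and jumps. The standard repair is structural: $Y$ is convex, so minimality gives the variational inequality $\int_0^1 E(\theta)\,\bigl(g(\theta)-h_\omega(\theta)\bigr)\,d\theta\ge 0$ for all $g\in Y$, where $E(\theta)$ denotes the left-hand side of \eqref{Euler-Lagrange equation for hull functions}, and one then uses one-sided variations (insertion of jumps, comparison with translates $h_\omega(\cdot+c)$) in the style of Mather to conclude $E\equiv 0$, including at discontinuities interpreted via one-sided limits. You in fact need this stronger conclusion, because evaluating at $\theta=\omega\cdot i$ to get \eqref{EulerLagrange} requires the equation at points where $h_\omega$ may jump.

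The second and more serious gap is in the ``moreover'' part, which you correctly identify as the heart, but your proposed mechanism cannot work as stated. A perturbation $\varphi$ with $\varphi_i=0$ for $|i|\ge N$ lowers $\mathscr{L}(u)$ by at most an $O(1)$ amount, whereas $\mathscr{P}_\omega$ in \eqref{Percivalian} is an energy \emph{per unit volume}: spreading one finite modification over a box of $M^d$ sites perturbs the phase average by $O(M^{-d})$, which vanishes as $M\to\infty$, so your limiting competitor in $Y$ has Percival action exactly $\mathscr{P}_\omega(h_\omega)$ and no contradiction with minimality results. To get a positive density of gain one must repeat the local improvement coherently at infinitely many translates, and the mutual compatibility of these surgeries (your ``consistency'') is precisely what has to be proved --- it is not a detail but the whole difficulty; moreover, equidistribution of $\{\omega\cdot i \bmod 1\}$ fails for resonant $\omega$, while the theorem covers every $\omega\in\mathbb{R}^d$. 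The proof in the literature inverts your logic: rather than pushing a configuration-level improvement up to a competitor in $Y$, one first establishes a non-crossing (Aubry-type) comparison lemma, shows that the translates $u^s_i=h_\omega(\omega\cdot i+s)$ (with one-sided limits at jumps) form a totally ordered family of critical configurations invariant under the $\mathbb{Z}^d$-action and integer shifts, and then proves that a configuration embedded in such an ordered field of extremals is automatically class-A --- a discrete calibration argument in which no averaged competitor is ever constructed. In summary: your step one is correct, step two is repairable by the convexity/one-sided-variation route, but step three rests on an averaging mechanism that provably yields only a non-strict inequality and would need to be replaced, not refined.
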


The description of the ground states are determined by certain
properties of $h_\omega$. In particular, we have
\begin{Definition}
The Frenkel-Kontorova model admits an invariant torus with rotation
vector $\omega$ if $h_\omega$ is a continuous function.
\end{Definition}
 %Note when
 % $h_\omega$ is a continuous function we say the Frenkel-Kontorova
 % model admits an invariant torus with rotation vector $\omega$.
 %Furthermore, when $\omega$ is not completely resonant, that is
 %$\omega=Q\cdot \omega'$ where $Q\in \mathbb{R}$ and elements of
 %$\omega'$ are integers, it is easy to see that $h_\omega$ is not
 %constant on any interval.

\section{A variational criterion of total destruction of invariant tori}
In this section, we will give a criterion of existence of invariant
tori independent of the rotation vector. First of all, it is easy to
prove the following lemma.
\begin{Lemma}\label{estimate lemma}
Let $h_1,~h_2\in Y$, then we have
\begin{equation}
\left|\int_0^1[(h_1(\theta+\omega_j)-h_1(\theta))^2-(h_2(\theta+\omega_j)-h_2(\theta))^2]d\theta\right|
\leq
2\int_0^1|h_1(\theta)-h_2(\theta)|d\theta.
\end{equation}
\end{Lemma}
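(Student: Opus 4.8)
The plan is to rewrite the integrand as a product one of whose factors is exactly $h_1-h_2$, the other being a symmetric second difference that monotonicity controls by $2$. Set $\phi_i(\theta)=h_i(\theta+\omega_j)-h_i(\theta)$ for $i=1,2$, and put $g=h_1-h_2$, $p=h_1+h_2$, $\Psi=\phi_1+\phi_2$. Since $h_i(\theta+1)=h_i(\theta)+1$, the functions $g$, $\phi_1$, $\phi_2$ and hence $\Psi$ are all $1$-periodic, whereas $p$ is nondecreasing with $p(\theta+1)=p(\theta)+2$. Factoring $\phi_1^2-\phi_2^2=(\phi_1-\phi_2)(\phi_1+\phi_2)$ and noting $\phi_1-\phi_2=g(\theta+\omega_j)-g(\theta)$, the quantity to be estimated becomes
\[
 I:=\int_0^1(\phi_1^2-\phi_2^2)\,d\theta=\int_0^1\bigl(g(\theta+\omega_j)-g(\theta)\bigr)\Psi(\theta)\,d\theta.
\]

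Next I would transfer the shift from $g$ onto $\Psi$ using periodicity. The integrand $g(\theta+\omega_j)\Psi(\theta)$ is $1$-periodic in $\theta$, so the substitution $\theta\mapsto\theta-\omega_j$ gives $\int_0^1 g(\theta+\omega_j)\Psi(\theta)\,d\theta=\int_0^1 g(\theta)\Psi(\theta-\omega_j)\,d\theta$, whence
\[
 I=\int_0^1 g(\theta)\bigl(\Psi(\theta-\omega_j)-\Psi(\theta)\bigr)\,d\theta.
\]
Expanding $\Psi(\theta-\omega_j)-\Psi(\theta)=\sum_{i=1,2}\bigl(\phi_i(\theta-\omega_j)-\phi_i(\theta)\bigr)$ and using $\phi_i(\theta-\omega_j)=h_i(\theta)-h_i(\theta-\omega_j)$ collapses the bracket to the symmetric second difference $2p(\theta)-p(\theta-\omega_j)-p(\theta+\omega_j)$, so that $I=\int_0^1 g(\theta)\bigl(2p(\theta)-p(\theta-\omega_j)-p(\theta+\omega_j)\bigr)\,d\theta$.

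The last and only delicate step is the pointwise bound $\bigl|2p(\theta)-p(\theta-\omega_j)-p(\theta+\omega_j)\bigr|\le 2$. Writing the bracket as $\bigl(p(\theta)-p(\theta+\omega_j)\bigr)+\bigl(p(\theta)-p(\theta-\omega_j)\bigr)$ and using that $p$ is nondecreasing with period increment $2$, each one-sided increment over a step $\omega_j\in[0,1]$ satisfies $p(\theta+\omega_j)-p(\theta)\in[0,2]$ and $p(\theta)-p(\theta-\omega_j)\in[0,2]$; the two terms thus carry opposite signs and their sum lies in $[-2,2]$. Substituting this bound yields $|I|\le 2\int_0^1|g(\theta)|\,d\theta=2\int_0^1|h_1-h_2|\,d\theta$, as claimed. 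I expect this cancellation to be the crux: a naive triangle inequality on the second difference gives only the constant $4$, and it is precisely the opposite signs of the forward and backward increments of the monotone function $p$ — each bounded by one full-period increment $2$ because the step has length at most one period — that produces the sharp factor $2$.
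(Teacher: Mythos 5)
Your proof is correct and is essentially the paper's own argument: you factor the difference of squares, use $1$-periodicity to shift the increment of $h_1-h_2$ onto the sum $\Psi=\phi_1+\phi_2$ (the paper's $\Delta$), and bound the resulting factor by $2$. Your final step phrased via the second difference of $p=h_1+h_2$ is just a rewriting of the paper's observation that $\Delta$ takes values in $[0,2]$, since $\Psi(\theta-\omega_j)-\Psi(\theta)=2p(\theta)-p(\theta-\omega_j)-p(\theta+\omega_j)$ is exactly a difference of two values of $\Delta$.
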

\begin{proof}
We set
\[
\Delta(\theta)=h_1(\theta+\omega_j)-h_1(\theta)+h_2(\theta+\omega_j)-h_2(\theta),
\]
then $\Delta$ is periodic with range included in the interval
$[0,2]$. Moreover,
\begin{equation}
\begin{split}
&\left|\int_0^1[(h_1(\theta+\omega_j)-h_1(\theta))^2-(h_2(\theta+\omega_j)-h_2(\theta))^2]d\theta\right|\\
= & \left|\int_0^1\Delta(\theta) \cdot [(h_1(\theta+\omega_j)-h_2(\theta+\omega_j))-(h_1(\theta)-h_2(\theta))]d\theta\right|\\
= & \left|\int_0^1\Delta(\theta)\cdot(h_1(\theta+\omega_j)-h_2(\theta+\omega_j))-\int_0^1\Delta(\theta)\cdot(h_1(\theta)-h_2(\theta))d\theta\right|\\
= & \left|\int_0^1\Delta(\theta-\omega_j)\cdot(h_1(\theta)-h_2(\theta))-\int_0^1\Delta(\theta)\cdot (h_1(\theta)-h_2(\theta))d\theta\right|\\
= & \left|\int_0^1(\Delta(\theta-\omega_j)-\Delta(\theta))(h_1(\theta)-h_2(\theta))d\theta\right|\\
\leq & ~~2\int_0^1|h_1(\theta)-h_2(\theta)|d\theta.
\end{split}
\end{equation}
\end{proof}

From \eqref{generating function} and Lemma \ref{estimate lemma}, we
obtain a necessary condition of existence of invariant tori.
\begin{Lemma}\label{estimates for the potential}
If the generalized Frenkel-Kontorova models admits an invariant
tori, then there exists an continuous  $h_1\in Y$ and a $j\in
\{1,\ldots,d\}$ such that for every  discontinuous $h_2\in Y$, we
have
\begin{equation}
\int_0^1[v_j(h_1(\theta))-v_j(h_2(\theta))]d\theta \leq \int_0^1
|h_1(\theta)-h_2(\theta)|d\theta.
\end{equation}
\end{Lemma}
\begin{proof}
We assume by contradiction that there exists a non-continuous
function $h\in Y$ such that for every $j\in \{1,\ldots,d\}$
\begin{equation}
\int_0^1[v_j(h_1\theta))-v_j(h_2(\theta))]d\theta
> \int_0^1 |h_1(\theta)-h_2(\theta)|d\theta.
\end{equation}
Due to Lemma \ref{estimate lemma}, we obtain
\begin{equation}
\sum_{j=1}^d\int_0^1
[(h_2(\theta+\omega_j)-h(\theta))^2-(h_1(\theta+\omega_j)-h_1(\theta))^2]d\theta
< 2\sum_{j=1}^d\int_0^1 [v_j(h_1(\theta))-v_j(h_2(\theta))]d\theta
\end{equation}by summing over the index $j$.
From \eqref{Percivalian} and \eqref{generating function}, we have
\begin{equation}
\mathscr{L}_\omega(h) < \mathscr{L}_\omega(h_\omega).
\end{equation}
Since $h$ is non-continuous, it is contradicted by the existence of
invariant tori.
\end{proof}

\section{Proof of Theorem \ref{fundamental lemma}}
Based on the preparations above, we will prove Theorem
\ref{fundamental lemma} in this section. First of all, we give the
construction of $H_j^n$.

\subsection{Construction of the generalized Frenkel-Kontorova models}
Consider a sequence of nearly integrable systems generated by
\begin{equation}\label{H}
H_j^n(x,x')=\frac{1}{2}(x-x')^2+v_j^n(x)
\end{equation}where $v_j^n$ is a non-negative function satisfying

\begin{equation}\label{assumptions on the potential}
\begin{cases}
v_j^n(x+1)=v_j^n(x),\\
\max v_j^n(x)=v_j^n(x_0)=\frac{1}{n},\\
\text{supp}\,v_j^n\cap[0,1]=B_R(x_0),\\
R=\left(\frac{1}{n}\right)^{\frac{1}{r}},
\end{cases}
\end{equation}
where $n\in\mathbb{N}$, $x_0=\frac{1}{2}$ and $r$ is a positive
constant independent of $n$. From \eqref{assumptions on the
potential}, it is easy to see that
\begin{equation}\label{vv}
||v_j^n||_{C^{r-\epsilon}}\rightarrow 0\quad\text{as}\ n\rightarrow
\infty.
\end{equation}

\subsection{Total destruction of invariant tori}
From Lemma \ref{estimates for the potential}, it suffices to find a
discontinuous function $h_2\in Y$ such that for every continuous
function $h_1\in Y$, we have
\begin{equation}\label{non-existenc condtion}
\int_0^1 [v_j^n(h_1(\theta))-v_j^n(h_2(\theta))]d\theta > \int_0^1
|h_1(\theta)-h_2(\theta)|d\theta.
\end{equation}

Without loss of generality, one can assume $h_1(0)=0$. To achieve
this, we construct $h_2$ in the following way. It is sufficient to
define it on $[0,1)$ due to the fact that
$h_2(\theta+1)=h_2(\theta)+1$.

We set
\begin{equation}
h_2(\theta)=\left\{\begin{array}{ll}
\hspace{-0.4em}h_1(\theta),&0\leq \theta \leq A,\\
\hspace{-0.4em}x_0-R,
&A<\theta\leq B,\\
\hspace{-0.4em}h_1(\theta),&B< \theta<1,
\end{array}\right.
\end{equation}
where $R=n^{-\frac{1}{r}}$ and $A=h_1^{-1}(x_0-R)$ if
$h_1^{-1}(x_0-R)$ is a single point or $A=\min h_1^{-1}(x_0-R)$
otherwise and $B=h_1^{-1}(x_0+R)$ if $h_1^{-1}(x_0+R)$ is a single
point or $B=\max h_1^{-1}(x_0+R)$ otherwise.

Based on the construction of $v_j^n$, we have that for $\theta\in
[0,1)\backslash [A, B]$
\begin{equation}\label{ep}|h_1(\theta)-\theta|\leq \epsilon(n),\end{equation}where
$\epsilon(n)\rightarrow 0$  as $n\rightarrow \infty$. Indeed, by
(\ref{H}), for every rotation vector $\omega$,
\[H_j^n(h_1(\theta),h_1(\theta+\omega))=\frac{1}{2}(h_1(\theta)-h_1(\theta+\omega))^2+v_j^n(h_1(\theta)).\]
For $\theta\in [0,1)\backslash [A,B]$,
\[h_1(\theta)\not\in\text{supp}\,v_j^n,\] hence,
$v_j^n(h_1(\theta))=0$. It follows that (\ref{ep}) is verified.

Moreover, a simple calculation implies that
\begin{equation}
\int_0^1|h_1(\theta)-h_2(\theta)|d\theta \leq
C_2\left(\frac{1}{n}\right)^{\frac{2}{r}}.
\end{equation}
By \eqref{assumptions on the potential}, we have
\begin{equation}
\int_0^1v_j^n(h_2(\theta))d\theta=0,
\end{equation}
moreover,
\begin{equation}
\int_0^1v_j^n(h_1(\theta))-v_j^n(h_2(\theta))d\theta=\int_0^1v_j^n(h_1(\theta))d\theta
= O(\left(\frac{1}{n}\right)^{1+\frac{1}{r}}).
\end{equation}
To obtain \eqref{non-existenc condtion}, it is enough to require
\begin{equation}
\frac{2}{r}>1+\frac{1}{r}.
\end{equation}
Hence we get $r<1$. From (\ref{vv}) and Lemma \ref{estimates for the
potential}, we complete the proof of Theorem \ref{fundamental
lemma}.

\noindent\textbf{Acknowledgement} We would like to Prof. R. de la
Llave for his very useful comments which helped improve the
presentation and exposition. We also warmly thank Prof. C-Q.Cheng
for many helpful discussions. This work is under the support of the
National Basic Research Programme of China (973 Programme,
2007CB814800) and Basic Research Programme of Jiangsu Province,
China (BK2008013).

\bibliographystyle{alpha}
\bibliography{reference}
\end{document}